\documentclass[12pt,a4paper]{article}
\usepackage[utf8]{inputenc}
\usepackage[english]{babel}
\usepackage{amsmath,amssymb,amsfonts}%
\usepackage{amsthm}%
\usepackage{mathrsfs}%
\usepackage{dsfont}
\usepackage[width=6.50in, height=8.00in, left=1.00in, top=1.00in]{geometry}

\DeclareMathOperator{\ReC}{\mathrm{Re}}
\DeclareMathOperator{\sign}{\mathrm{sign}}
% ----------------------------------------------------

\theoremstyle{plain}
\newtheorem{theorem}{Theorem}
\newtheorem{lemma}{Lemma}
\newtheorem{remark}{Remark}

\author{Mykhailo Osypchuk}

\title{Bilateral estimates of some pseudo-derivatives of the~transition probability density of an~isotropic $\alpha$-stable stochastic process}

\date{\today}

\begin{document}
	
\maketitle
	
%%% ----------------------------------------------------------------------
\begin{abstract}In the paper, the transition probability density of isotropic $\alpha$-stable stochastic process in a finite dimensional Euclidean space is considered. The results of applying pseudo differential operators with respect spatial variables to this function are estimated from the both side: above and below. Operators in the consideration are defined by the symbols $|\lambda|^\varkappa$ and $\lambda|\lambda|^{\varkappa-1}$, where $\varkappa$ is some constant. The first operator with negative sign is fractional Laplacian and the second one multiplied by imaginary unit is fractional gradient.
\end{abstract}
%\footnote{Department of Mathematical and Functional Analysis, Vasyl Stefanyk Precarpathian National University, 57~Shevchenko~str.,76018~Ivano-Frankivsk, Ukraine\\ ORCID: 0000-0001-6100-1654\\ E-mail: mykhailo.osypchuk@pnu.edu.ua}

%\keynote{stable process, Green's function, fractional Laplacian, fractional gradient}

%\subjclass{35S05, 60G52}
% 2010 Mathematics Subject Classification
% According to http://www.ams.org/msc/msc2010.html
	
\section{Introduction}

\label{s1}
Let us define some constants $\alpha\in(0,2)$, $c>0$ and consider an isotropic $\alpha$-stable stochastic process $(x(t))_{t\ge0}$ in $d$-dimensional Euclidean space $\mathbb{R}^d$.  As usual, we denote by $(\cdot,\cdot)$ the inner product and by $|\cdot|$ the norm in $\mathbb{R}^d$ (we use the last notation for denoting the absolute value of a real number and the module of a complex number). The process $(x(t))_{t\ge0}$ is a strong Markov process with transition probability density $(g(t,x,y))_{t>0, x\in\mathbb{R}^d, y\in\mathbb{R}^d}$  given by the following equality
\begin{equation}
\label{f7}
g(t,x,y)=\frac{1}{(2\pi)^d}\int_{\mathbb{R}^d}\exp\{i(x-y,\lambda)-ct|\lambda|^\alpha\}\mathrm{d}\lambda.
\end{equation}
The function $(g(t,x,y))_{t>0, x\in\mathbb{R}^d, y\in\mathbb{R}^d}$ is the Green's function (or a fundamental solution) of the following pseudo differential equation
$
\frac{\partial}{\partial t}u(t,x)=-c(-\Delta)^\frac{\alpha}{2} u(t,\cdot)(x).	
$
 Here and in what follows $Df(\cdot,y)(x)$ denotes the operator $D$ action on the function $f$ with respect to the first variable in the point $x$. In the case of $\alpha=2$ the process $(x(t))_{t\ge0}$ is the Brownian motion, function \eqref{f7} has an explicit form and all calculation will be of a different type. We do not consider this case.

According to the Blumenthal and Getoor's results (see \cite{BlumGet}), we have the next estimations of the function $(g(t,x,y))_{t>0, x\in\mathbb{R}^d, y\in\mathbb{R}^d}$
\begin{equation}
\label{f22}
\frac{G_1\, t}{(t^\frac{1}{\alpha}+|x-y|)^{d+\alpha}}\le g(t,x,y)\le\frac{G_2\, t}{(t^\frac{1}{\alpha}+|x-y|)^{d+\alpha}},\quad t>0,\ x\in\mathbb{R}^d, y\in\mathbb{R}^d,
\end{equation}
where $G_1$ and $G_2$ are some positive constants.

Let us define the following operators by their symbols: the operator $\Delta_\varkappa$ has the symbol $(|\lambda|^\varkappa)_{\lambda\in\mathbb{R}^d}$, and $\nabla_\varkappa$ has the symbol  $(\lambda|\lambda|^{\varkappa-1})_{\lambda\in\mathbb{R}^d}$. Here $\varkappa$ is some constant. So far, we do not limit the value of $\varkappa$.
\begin{remark}
	The operator $-\Delta_\varkappa$ is the  fractional Laplacian of the order $\varkappa$ and the operator $i\nabla_\varkappa$ is the fractional gradient  of the order $\varkappa$.
\end{remark}

For $\varkappa>0$ and $\alpha\ge1$ it is well-known the following estimation ($M>0$ is some constant)
\begin{equation}
\label{f23}
|D^{(\varkappa)} g(t,\cdot,y)(x)|\le \frac{M}{(t^\frac{1}{\alpha}+|x-y|)^{d+\varkappa}},\quad t>0,\ x\in\mathbb{R}^d, y\in\mathbb{R}^d,
\end{equation}
where $D^{(\varkappa)}$ means any pseudo differential operator with smooth enough and homogeneous of the degree $\varkappa$ symbol. If $\varkappa$ is additionally integer we have more accurate estimation 
\begin{equation}
	\label{f23_1}
	|D^{(\varkappa)} g(t,\cdot,y)(x)|\le \frac{M t}{(t^\frac{1}{\alpha}+|x-y|)^{d+\alpha+\varkappa}},\quad t>0,\ x\in\mathbb{R}^d, y\in\mathbb{R}^d.
\end{equation}
For details, see \cite[Ch. 4]{EidIvKoch}.

The goal of this paper is to obtain the similar to \eqref{f22} estimations for $\nabla_\varkappa g(t,\cdot,y)$ and $\Delta_\varkappa g(t,\cdot,y)$ ($t>0$ and $y\in\mathbb{R}^d$ are fixed) with as wide set of parameter $\varkappa$ values as possible.

Our main results are presented in Section \ref{s2}. There we give exact asymptotics, if $|x|\to+\infty$, for $\Delta_\varkappa g(1,\cdot,0)(x)$ and  $\nabla_\varkappa g(1,\cdot,0)(x)$ with $\varkappa>-(d\wedge2)$ or  $\varkappa>1-(d\wedge2)$, respectively. Using the mentioned asymptotics we establish estimations of $\Delta_\varkappa g(t,\cdot,y)(x)$ and  $\nabla_\varkappa g(t,\cdot,y)(x)$ for all $t>0$, $x\in\mathbb{R}^d$ and $y\in\mathbb{R}^d$.

\section{One auxiliary lemma}
The following statement will be used to construct asymptotic and estimations below in the paper. But it can have an independent meaning. Our proof method is similar to Blumenthal and Getoor's proof in a simpler case (see \cite{BlumGet}).
\begin{lemma}\label{lem1}
	For any pair of real numbers $\nu$ and $\mu$ satisfied the assumption $\nu>|\mu|-1$ and a real number $\alpha>0$ the following equality
	\begin{equation}
		\label{f1_1}
		\lim_{r\to+\infty}\int_0^{+\infty} t^\nu e^{-\left(\frac{t}{r}\right)^\alpha} J_\mu(t)\mathrm{d}t =\frac{2^\nu}{\pi}\Gamma\left(\frac{\nu-\mu+1}{2}\right) \Gamma\left(\frac{\nu+\mu+1}{2}\right) \cos\frac{\pi(\nu-\mu)}{2},
	\end{equation}
	in which $J_\mu$ denotes the Bessel function of the first kind of order $\mu$, holds true. 	
\end{lemma}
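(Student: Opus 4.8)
The plan is to replace the real Bessel function $J_\mu$ by the Hankel function of the first kind $H^{(1)}_\mu$ and to rotate the path of integration into the upper half-plane, where — after letting $r\to+\infty$ — it can be pushed onto the positive imaginary axis; there $H^{(1)}_\mu$ becomes the Macdonald function $K_\mu$ and the integral is classical. Since $\nu$, $\mu$, $\alpha$, $r$ are real, the factor $e^{-(t/r)^\alpha}$ and the number $J_\mu(t)$ are real for $t>0$, and $H^{(1)}_\mu(t)=J_\mu(t)+iY_\mu(t)$, so
\[
\int_0^{+\infty}t^\nu e^{-(t/r)^\alpha}J_\mu(t)\,\mathrm{d}t=\ReC\int_0^{+\infty}t^\nu e^{-(t/r)^\alpha}H^{(1)}_\mu(t)\,\mathrm{d}t ,
\]
and it suffices to evaluate the last integral, taking the real part only at the end.

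First I would fix an angle $\theta$ with $0<\theta<\frac{\pi}{2}$ and $\alpha\theta<\frac{\pi}{2}$ (possible for every $\alpha>0$) and, for each fixed $r$, apply Cauchy's theorem on the sector $\{\varepsilon\le|z|\le R,\ 0\le\arg z\le\theta\}$, letting $\varepsilon\downarrow0$ and $R\uparrow+\infty$. The small-arc term vanishes because $z^\nu H^{(1)}_\mu(z)=O\bigl(|z|^{\nu-|\mu|}\bigr)$ as $z\to0$ (with an extra logarithm when $\mu=0$; negative orders being reduced to positive ones via $H^{(1)}_{-\mu}=e^{i\mu\pi}H^{(1)}_\mu$), and $|z|^{\nu-|\mu|}$ is integrable near the origin precisely under the hypothesis $\nu>|\mu|-1$. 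The large-arc term vanishes because $|H^{(1)}_\mu(z)|$ decays like $|z|^{-1/2}e^{-|z|\sin(\arg z)}$ at infinity while $\bigl|e^{-(z/r)^\alpha}\bigr|=e^{-(|z|/r)^\alpha\cos(\alpha\arg z)}\le e^{-(|z|/r)^\alpha\cos(\alpha\theta)}$ with $\cos(\alpha\theta)>0$ decays faster than any power of $|z|$ — this is exactly the role of the regularizer, and the reason one must stop at $\theta<\frac{\pi}{2\alpha}$ rather than rotate directly onto the imaginary axis when $\alpha>1$. Hence, for every fixed $r$,
\[
\int_0^{+\infty}t^\nu e^{-(t/r)^\alpha}H^{(1)}_\mu(t)\,\mathrm{d}t=e^{i\theta(\nu+1)}\int_0^{+\infty}\rho^\nu e^{-(\rho e^{i\theta}/r)^\alpha}H^{(1)}_\mu(\rho e^{i\theta})\,\mathrm{d}\rho .
\]

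Next I would let $r\to+\infty$ here. For $r\ge1$ the integrand on the right is dominated in modulus by $\rho^\nu|H^{(1)}_\mu(\rho e^{i\theta})|$, which — again by the small-$\rho$ estimate and by the exponential decay $e^{-\rho\sin\theta}$ at infinity — is integrable on $(0,+\infty)$; dominated convergence, followed by one further contour rotation of the ray $\arg u=\theta$ onto the ray $\arg u=\frac{\pi}{2}$ (now legitimate because $H^{(1)}_\mu$ on its own decays exponentially in the open upper half-plane, by the same two arc estimates but without any $r$), gives $\lim_{r\to+\infty}\int_0^{+\infty}t^\nu e^{-(t/r)^\alpha}H^{(1)}_\mu(t)\,\mathrm{d}t=\int_0^{i\infty}u^\nu H^{(1)}_\mu(u)\,\mathrm{d}u$. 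On the positive imaginary axis one has $H^{(1)}_\mu(is)=\tfrac{2}{\pi}\,i^{-\mu-1}K_\mu(s)$ (the power of $i$ taken as $i^{a}=e^{i\pi a/2}$), and the classical formula $\int_0^{+\infty}s^\nu K_\mu(s)\,\mathrm{d}s=2^{\nu-1}\Gamma\bigl(\tfrac{\nu-\mu+1}{2}\bigr)\Gamma\bigl(\tfrac{\nu+\mu+1}{2}\bigr)$, valid for $\nu>|\mu|-1$, then yields $\int_0^{i\infty}u^\nu H^{(1)}_\mu(u)\,\mathrm{d}u=\frac{2^\nu}{\pi}e^{i\pi(\nu-\mu)/2}\Gamma\bigl(\tfrac{\nu-\mu+1}{2}\bigr)\Gamma\bigl(\tfrac{\nu+\mu+1}{2}\bigr)$. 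Taking the real part and using $\ReC\bigl(e^{i\pi(\nu-\mu)/2}\bigr)=\cos\tfrac{\pi(\nu-\mu)}{2}$ gives \eqref{f1_1}.

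The hard part, and essentially the only place requiring care, will be justifying the contour deformations uniformly in $r$: showing the arc at infinity is negligible for each fixed $r$ in spite of the growing weight $t^\nu$ (which works only because of the factor $e^{-(t/r)^\alpha}$, and which is what forces the intermediate angle $\theta<\frac{\pi}{2\alpha}$), and exhibiting one integrable majorant valid for all large $r$ so that dominated convergence applies. The remaining ingredients — the behaviour of $H^{(1)}_\mu$ near $0$ and near $\infty$, the relation between $H^{(1)}_\mu$ on the imaginary axis and $K_\mu$, and the integral of $K_\mu$ above — are standard.
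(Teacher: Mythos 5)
Your argument is correct and is essentially the paper's own proof seen through the change of variable $z\mapsto iz$: the paper also writes $J_\mu=\ReC H^{(1)}_\mu$, rotates onto an intermediate ray whose angle is constrained by $\alpha$ so that the regularizer keeps decaying, passes to the limit $r\to+\infty$ by dominated convergence on that ray, rotates the rest of the way, and finishes with the classical integral $\int_0^{+\infty}s^\nu K_\mu(s)\,\mathrm{d}s=2^{\nu-1}\Gamma\left(\frac{\nu-\mu+1}{2}\right)\Gamma\left(\frac{\nu+\mu+1}{2}\right)$. The only cosmetic difference is that you stay with $H^{(1)}_\mu$ in the first quadrant and convert to $K_\mu$ at the very end, whereas the paper substitutes $t=is$ at the outset and carries out the same contour and limit arguments directly for $K_\mu$.
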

\begin{proof}
	As usual, denote by $K_\mu$ the modified Bessel function of the second kind of the order $\mu$ and by $H_\mu^{(1)}$ the first of the Hankel functions of the order $\mu$ (also named by the Bessel function of the third kind). Use the known relations between Bessel functions, i.e.
	\[
	J_\mu(z)=\ReC H_\mu^{(1)}(z),\quad K_\mu(z)=\frac{\pi i}{2}e^{\mu\frac{\pi}{2}i}H_\mu^{(1)}(iz),
	\] 
	which are fulfilled for all complex numbers $\mu$ and $z$ (with $-\pi<\arg z<\pi/2$ in second equality). 
	
	After a change of the variable in the integral from formula \eqref{f1_1} we obtain the following equality (we put $t=is$)
	\begin{equation}\label{f1_3}
		\int_0^{+\infty} t^\nu e^{-\left(\frac{t}{r}\right)^\alpha} J_\mu(t)\mathrm{d}t = -\frac{2}{\pi}\ReC\left(e^{(\nu-\mu)\frac{\pi}{2}i} \int_{-\infty i}^0 s^\nu e^{-\left(\frac{s}{r}\right)^\alpha e^{\alpha\frac{\pi}{2}i}}K_\mu(s)\mathrm{d}s\right). 
	\end{equation}
	Let us denote the last integral by $K(r)$. Then we can write down the following equality
	\begin{equation}\label{f1_2}
		K(r)=-\int_{l_\beta}z^\nu e^{-\left(\frac{z}{r}\right)^\alpha e^{\alpha\frac{\pi}{2}i}}K_\mu(z)\mathrm{d}z,
	\end{equation}
	where $l_\beta=\{z=se^{i\beta}:s\ge0\}$ with some $\beta\in\left(-\frac{\pi}{2},\frac{\pi}{2}\left(\frac{1}{\alpha}-1\right)\wedge 0\right)$.	Here we are using that for $\gamma_R=\{z=Re^{i\varphi}:-\frac{\pi}{2}\le\varphi\le\beta\}$ with any big enough $R>0$, the following relations
	\[
	\left|\int_{\gamma_R}z^\nu e^{-\left(\frac{z}{r}\right)^\alpha e^{\alpha\frac{\pi}{2}i}}K_\mu(z)\mathrm{d}z\right|\le \left(\beta+\frac{\pi}{2}\right)R^{\nu+1}e^{-\left(\frac{R}{r}\right)^\alpha\cos(\beta+\frac{\pi}{2})\alpha}\sqrt\frac{2\pi}{R}\to0, \mbox{ as }R\to+\infty,
	\]
	are fulfilled, since, as it is well-known, $K_\mu(z)\sim\sqrt{\frac{\pi}{2z}}e^{-z}$, as $z\to\infty$.
	
	Now, using that there can be passing to the limit as $r\to+\infty$ in the integral from \eqref{f1_2}. Really, the integrand in \eqref{f1_2} is estimated by the integrated expressions $\sqrt{2\pi}s^{\nu-\frac{1}{2}}e^{-s\cos\beta}$ for a big enough $s=|z|$ and $s^{\nu-|\mu|}\Gamma(|\mu|)2^{|\mu|}$ for small $s=|z|$. Here we are using the well-known relation $K_\mu(z)\sim z^{-|\mu|}\Gamma(|\mu|)2^{|\mu|-1}$, as $z\to0$.
	
	Since for $\hat{\gamma}_R=\{z=Re^{i\varphi}:\beta\le\varphi\le0\}$ with some big enough $R>0$, we have
	\[
	\left|\int_{\hat{\gamma}_R}z^\nu K_\mu(z)\mathrm{d}z\right|\le-\beta R^{\nu-\frac{1}{2}}\sqrt{2\pi}e^{-R\cos\beta}\to0,\quad R\to+\infty,
	\]
	the following equalities
	\begin{equation}\label{f1_4}
		\lim_{r\to+\infty}K(r)=-\int_{l_\beta}z^\nu K_\mu(z)\mathrm{d}z=-\int_0^{+\infty}t^\nu K_\mu(t)\mathrm{d}t
	\end{equation}
	hold true.
	The last integral in \eqref{f1_4} can be calculated if $\nu>|\mu|-1$. This can be checked by using mathematical software. It equals to 
	$2^\nu\Gamma\left(\frac{\nu-\mu+1}{2}\right) \Gamma\left(\frac{\nu+\mu+1}{2}\right)$. 
	Finally, using this fact and formulae \eqref{f1_3} and \eqref{f1_4}, we obtain the lemma statement.
\end{proof}

\section{Asymptotics and estimations}

\label{s2}
According to presentation \eqref{f7} of the function $(g(t,x,y))_{t>0, x\in\mathbb{R}^d, y\in\mathbb{R}^d}$ we can write down the following relations ($t>0$, $x\in\mathbb{R}^d$, $y\in\mathbb{R}^d$)
\[
\Delta_\varkappa g(t,\cdot,y)(x)=\frac{1}{(2\pi)^d}\int_{\mathbb{R}^d}|\lambda|^\varkappa\exp\{i(x-y,\lambda)-ct|\lambda|^\alpha\}\mathrm{d}\lambda,
\]
\[
\nabla_\varkappa g(t,\cdot,y)(x)=\frac{1}{(2\pi)^d}\int_{\mathbb{R}^d}\lambda |\lambda|^{\varkappa-1}\exp\{i(x-y,\lambda)-ct|\lambda|^\alpha\}\mathrm{d}\lambda,
\]
if the corresponding integrals exist. It is easy to see that the constant $\varkappa$ must be greater than $-d$. We consider this condition fulfilled.

To simplify the entries, we introduce the following notations
\begin{equation}
\label{f9_1}
D(\varkappa,x)=\frac{1}{(2\pi)^d}\int_{\mathbb{R}^d}|\lambda|^\varkappa\exp\{i(x,\lambda)-|\lambda|^\alpha\}\mathrm{d}\lambda,
\end{equation}
\begin{equation}
\label{f9_2}
N(\varkappa,x)=\frac{1}{(2\pi)^d}\int_{\mathbb{R}^d}\lambda|\lambda|^{\varkappa-1}\exp\{i(x,\lambda)-|\lambda|^\alpha\}\mathrm{d}\lambda.
\end{equation}
So, for all $t>0$, $x\in\mathbb{R}^d$, $y\in\mathbb{R}^d$, and $\varkappa>-d$, we have 
\begin{equation}
\label{f10}
\Delta_\varkappa g(t,\cdot,y)(x)=(ct)^{-\frac{d+\varkappa}{\alpha}}D(\varkappa,(ct)^{-\frac{1}{\alpha}}(x-y)), 
\end{equation}
\begin{equation}
\label{f11}
\nabla_\varkappa g(t,\cdot,y)(x)=(ct)^{-\frac{d+\varkappa}{\alpha}}N(\varkappa,(ct)^{-\frac{1}{\alpha}}(x-y)).
\end{equation}

The following statement indicates the boundedness of the functions \eqref{f10}, \eqref{f11}.
\begin{theorem}
	For any $\varkappa>-d$, there exists a constant $K>0$, such that for all $t>0$, $x\in\mathbb{R}^d$, and $y\in\mathbb{R}^d$ the following estimation
	\begin{equation}
	\label{f21}
	|\Delta_\varkappa g(t,\cdot,y)(x)|+|\nabla_\varkappa g(t,\cdot,y)(x)|\le K\,t^{-\frac{d+\varkappa}{\alpha}}
	\end{equation}
	holds true.
\end{theorem}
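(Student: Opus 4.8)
The plan is to use the scaling identities \eqref{f10} and \eqref{f11} to reduce the assertion to a single statement that does not involve $t$ or $y$: namely that the functions $x\mapsto D(\varkappa,x)$ and $x\mapsto N(\varkappa,x)$ from \eqref{f9_1}, \eqref{f9_2} are bounded on $\mathbb{R}^d$. Indeed, \eqref{f10} and \eqref{f11} give
\[
|\Delta_\varkappa g(t,\cdot,y)(x)|+|\nabla_\varkappa g(t,\cdot,y)(x)|=(ct)^{-\frac{d+\varkappa}{\alpha}}\Bigl(\bigl|D\bigl(\varkappa,(ct)^{-\frac1\alpha}(x-y)\bigr)\bigr|+\bigl|N\bigl(\varkappa,(ct)^{-\frac1\alpha}(x-y)\bigr)\bigr|\Bigr),
\]
so once one knows $C:=\sup_{z\in\mathbb{R}^d}\bigl(|D(\varkappa,z)|+|N(\varkappa,z)|\bigr)<\infty$, the theorem follows with $K=c^{-\frac{d+\varkappa}{\alpha}}C$, recalling that $c>0$ is a fixed constant.

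To bound $D$ and $N$ I would simply move the absolute value inside the integrals in \eqref{f9_1}, \eqref{f9_2}. Since $|e^{i(x,\lambda)}|=1$ and $\bigl|\lambda\,|\lambda|^{\varkappa-1}\bigr|=|\lambda|^\varkappa$, and since $\bigl|\int F\,\mathrm{d}\lambda\bigr|\le\int|F|\,\mathrm{d}\lambda$ also for vector-valued integrands $F$, we obtain for every $x\in\mathbb{R}^d$
\[
|D(\varkappa,x)|\le\frac{1}{(2\pi)^d}\int_{\mathbb{R}^d}|\lambda|^\varkappa e^{-|\lambda|^\alpha}\,\mathrm{d}\lambda,\qquad |N(\varkappa,x)|\le\frac{1}{(2\pi)^d}\int_{\mathbb{R}^d}|\lambda|^\varkappa e^{-|\lambda|^\alpha}\,\mathrm{d}\lambda,
\]
the right-hand side being independent of $x$. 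It remains to see that this integral is finite: passing to polar coordinates,
\[
\int_{\mathbb{R}^d}|\lambda|^\varkappa e^{-|\lambda|^\alpha}\,\mathrm{d}\lambda=\sigma_{d-1}\int_0^{+\infty}\rho^{d+\varkappa-1}e^{-\rho^\alpha}\,\mathrm{d}\rho=\frac{\sigma_{d-1}}{\alpha}\,\Gamma\!\left(\frac{d+\varkappa}{\alpha}\right),
\]
where $\sigma_{d-1}=2\pi^{d/2}/\Gamma(d/2)$ is the surface measure of the unit sphere in $\mathbb{R}^d$. The last integral converges precisely because the standing hypothesis $\varkappa>-d$ forces the exponent $d+\varkappa-1>-1$, so there is no non-integrable singularity at $\rho=0$, while $e^{-\rho^\alpha}$ controls the behaviour at $\rho\to+\infty$. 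Combining the three displays proves the theorem, with, for instance, $K=\dfrac{2\,\sigma_{d-1}}{(2\pi)^d\,\alpha}\,\Gamma\!\left(\dfrac{d+\varkappa}{\alpha}\right)c^{-\frac{d+\varkappa}{\alpha}}$.

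There is essentially no analytic difficulty here: the only place something could fail is the convergence of the radial integral at the origin, and that is exactly what the assumption $\varkappa>-d$ excludes; the mild point to state carefully is that discarding the oscillatory factor $e^{i(x,\lambda)}$ yields a bound that is \emph{uniform} in $x$. Note that Lemma \ref{lem1} is not required for this estimate — it will instead be needed for the sharp large-$|x|$ asymptotics of $D(\varkappa,x)$ and $N(\varkappa,x)$, where the crude bound used above is insufficient.
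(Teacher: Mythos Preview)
Your proof is correct and follows exactly the approach of the paper: reduce via the scaling relations \eqref{f10}--\eqref{f11} to the boundedness of $D(\varkappa,\cdot)$ and $N(\varkappa,\cdot)$, and deduce that boundedness directly from the integral representations \eqref{f9_1}--\eqref{f9_2}. The paper's own proof is a two-line sketch of the same argument, whereas you have carried out the polar-coordinate computation and tracked the explicit constant.
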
	
\begin{proof}
	As it follows from \eqref{f9_1} and \eqref{f9_2}, the functions $(D(\varkappa,x))_{x\in\mathbb{R}^d}$ and $(N(\varkappa,x))_{x\in\mathbb{R}^d}$ are bounded for every $\varkappa>-d$. Using \eqref{f10} and \eqref{f11}, we obtain the theorem statement.
\end{proof}

It is evident that $N(\varkappa,x)=-i\nabla D(\varkappa-1,\cdot)(x)$ for all $x\in\mathbb{R}^d$ and $\varkappa>-d+1$.  It is clear that the functions $(D(\varkappa,x))_{x\in\mathbb{R}^d}$ and $(N(\varkappa,x))_{x\in\mathbb{R}^d}$ are bounded on $\mathbb{R}^d$ for all $\varkappa>-d$. The following statement defines the asymptotic of the function $(D(\varkappa,x))_{x\in\mathbb{R}^d}$ if $|x|\to+\infty$ for fixed $\varkappa$.
\begin{theorem}
	\label{th1}
	For $\varkappa>-(d\wedge2)$ the following relation
	\begin{equation}
	\label{f12}
	\lim_{|x|\to+\infty}|x|^{d+\varkappa}D(\varkappa,x)=\frac{2^\varkappa}{\pi^{\frac{d}{2}+1}} \Gamma\left(\frac{d+\varkappa}{2}\right)\Gamma\left(\frac{\varkappa}{2}+1\right) \cos\frac{\varkappa+1}{2}\pi
	\end{equation}
	is fulfilled.
\end{theorem}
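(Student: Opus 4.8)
The plan is to reduce the $d$-dimensional integral \eqref{f9_1} to a one-dimensional Bessel integral and then apply Lemma~\ref{lem1}. First I would pass to (hyper)spherical coordinates in $\mathbb{R}^d$, writing $\lambda = \rho\omega$ with $\rho=|\lambda|\ge0$ and $\omega\in S^{d-1}$, and integrate out the angular variable. Since the integrand depends on the angle only through $(x,\omega)$, the angular integral $\int_{S^{d-1}} e^{i\rho(x,\omega)}\,\mathrm{d}\omega$ is a standard Bessel-type expression: it equals $(2\pi)^{d/2}(\rho|x|)^{-(d/2-1)}J_{d/2-1}(\rho|x|)$ up to the usual normalising constant. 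This turns \eqref{f9_1} into
\[
D(\varkappa,x)=\frac{c_d}{|x|^{d-2}}\int_0^{+\infty}\rho^{\varkappa+1}\,e^{-\rho^\alpha}\,(\rho|x|)^{-(d/2-1)}J_{d/2-1}(\rho|x|)\,\mathrm{d}\rho
\]
for an explicit dimensional constant $c_d$; substituting $t=\rho|x|$ then yields
\[
|x|^{d+\varkappa}D(\varkappa,x)=c_d'\int_0^{+\infty} t^{\,\varkappa+d/2}\,e^{-(t/|x|)^\alpha}\,J_{d/2-1}(t)\,\mathrm{d}t,
\]
which is exactly the left-hand side of \eqref{f1_1} with $r=|x|$, $\mu=d/2-1$ and $\nu=\varkappa+d/2$.

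Next I would check the hypothesis $\nu>|\mu|-1$ of Lemma~\ref{lem1}. With $\mu=d/2-1\ge0$ (for $d\ge1$) this reads $\varkappa+d/2>d/2-2$, i.e. $\varkappa>-2$; combined with the standing assumption $\varkappa>-d$ this is precisely $\varkappa>-(d\wedge2)$, the hypothesis of the theorem. So Lemma~\ref{lem1} applies and gives
\[
\lim_{|x|\to+\infty}|x|^{d+\varkappa}D(\varkappa,x)=c_d'\cdot\frac{2^\nu}{\pi}\,\Gamma\!\Big(\tfrac{\nu-\mu+1}{2}\Big)\,\Gamma\!\Big(\tfrac{\nu+\mu+1}{2}\Big)\cos\tfrac{\pi(\nu-\mu)}{2}.
\]
With $\nu-\mu=\varkappa+1$ and $\nu+\mu=\varkappa+d-1$ the two Gamma factors become $\Gamma(\varkappa/2+1)$ and $\Gamma((d+\varkappa)/2)$, and the cosine becomes $\cos\frac{\varkappa+1}{2}\pi$, matching \eqref{f12}. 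It then remains only to pin down the constant: tracking $c_d'$ through the spherical-coordinate reduction — the surface measure of $S^{d-1}$, the factor $(2\pi)^{-d}$ from \eqref{f9_1}, and the powers of $2\pi$ in the Bessel representation of the angular integral — should collapse to $\pi^{-(d/2+1)}$ after cancellation, which I would verify by a direct bookkeeping computation.

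The main obstacle I anticipate is twofold. First, justifying the interchange of the angular integration with the radial one and, more delicately, the passage to the limit inside the radial integral: the integrand $t^{\varkappa+d/2}e^{-(t/|x|)^\alpha}J_{d/2-1}(t)$ is not absolutely integrable uniformly in $|x|$ near $t=+\infty$ once the Gaussian-type cutoff is relaxed, which is exactly the subtlety Lemma~\ref{lem1} was designed to absorb — so the real content here is recognising that the lemma already does this work and that no further argument is needed. Second, there is a genuine bookkeeping hazard in the constant: the Bessel representation of $\int_{S^{d-1}}e^{i(\xi,\omega)}\mathrm{d}\omega$ carries dimension-dependent factors of $2^{d/2-1}$, $\Gamma(d/2)$ and $\pi^{d/2}$, and an off-by-a-power-of-$2\pi$ slip would corrupt the final constant; I would cross-check the $d=1$ and $d=3$ cases (where everything is elementary) against \eqref{f12} to be sure. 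One should also note the edge case $d=1$, where $\mu=-1/2$ and $J_{-1/2}(t)=\sqrt{2/(\pi t)}\cos t$, so the reduction degenerates but the formula \eqref{f1_1} still applies with $|\mu|=1/2$; this is consistent with the hypothesis $\varkappa>-(d\wedge2)=-1=-d$.
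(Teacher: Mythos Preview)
Your proposal is correct and follows essentially the same route as the paper: reduce $D(\varkappa,\cdot)$ to a one-dimensional Hankel-type integral via the radial Fourier transform (the paper simply cites \cite{BochChandr} for the formula $D(\varkappa,x)=(2\pi)^{-d/2}|x|^{-d-\varkappa}\int_0^\infty t^{d/2+\varkappa}e^{-(t/|x|)^\alpha}J_{d/2-1}(t)\,\mathrm{d}t$ rather than deriving it), then apply Lemma~\ref{lem1} with $\nu=d/2+\varkappa$, $\mu=d/2-1$. One small slip to clean up: you write ``$\mu=d/2-1\ge0$ (for $d\ge1$)'', which fails at $d=1$; you catch this later as an edge case, but the hypothesis check should simply split on $d\ge2$ (giving $\varkappa>-2$) versus $d=1$ (giving $\varkappa>-1$), which together yield $\varkappa>-(d\wedge2)$.
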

\begin{proof}
According to the function $D(\varkappa,\cdot)$ is Fourier transform of the radial function, we can write down (see \cite[Ch II, \S 7, p. 69]{BochChandr})
the following representation
\begin{equation}\label{f12_2}
D(\varkappa,x)=(2\pi)^{-\frac{d}{2}}|x|^{-d-\varkappa}I(d,\varkappa,|x|),
\end{equation}
where $I(d,\varkappa,r):=\int_0^{+\infty} t^{\frac{d}{2}+\varkappa} e^{-\left(\frac{t}{r}\right)^\alpha} J_{\frac{d}{2}-1}(t)\mathrm{d}t$.

It is easy to verify that the assumptions of Lemma \ref{lem1} are fulfilled for $\nu=\frac{d}{2}+\varkappa$ and $\mu=\frac{d}{2}-1$ if $\varkappa>-(d\wedge2)$. Then we obtain
\[
D(\varkappa,x)\sim|x|^{-d-\varkappa}\frac{2^\varkappa}{\pi^{\frac{d}{2}+1}} \Gamma\left(\frac{\varkappa}{2}+1\right)\Gamma\left(\frac{d+\varkappa}{2}\right) \cos\frac{\varkappa+1}{2}\pi,\quad |x|\to+\infty,
\]
if $\varkappa>-(d\wedge2)$. The theorem is proved.
\end{proof}

It is evident that this asymptotic is useless if $\varkappa$ is an even integer. Therefore, we have to consider this case separately. The corresponding statement is as follows.
\begin{theorem}\label{th2}
	If $\varkappa\ge0$ and it is even integer, the relation
	\begin{equation}
	\label{f12_1}
	\lim_{|x|\to+\infty}|x|^{d+\alpha+\varkappa}D(\varkappa,x)=\frac{(\alpha-\varkappa) 2^{\varkappa+\alpha-1}}{\pi^{\frac{d}{2}+1}} \Gamma\left(\frac{\alpha+\varkappa}{2}\right) \Gamma\left(\frac{d+\alpha+\varkappa}{2}\right) 
	\cos\frac{\alpha+\varkappa-1}{2}\pi
	\end{equation}
	is fulfilled.	
\end{theorem}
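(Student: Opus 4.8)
By \eqref{f12_2} one has $|x|^{d+\alpha+\varkappa}D(\varkappa,x)=(2\pi)^{-\frac{d}{2}}|x|^{\alpha}I(d,\varkappa,|x|)$, so it suffices to evaluate $\lim_{r\to+\infty}r^{\alpha}I(d,\varkappa,r)$. When $\varkappa$ is an even nonnegative integer the factor $\cos\frac{(\varkappa+1)\pi}{2}$ in Lemma~\ref{lem1}, applied with $\nu=\frac{d}{2}+\varkappa$ and $\mu=\frac{d}{2}-1$, vanishes, so $I(d,\varkappa,r)\to0$ and Theorem~\ref{th1} carries no information; the next term, of order $r^{-\alpha}$, must be extracted. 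The plan is to integrate by parts in $t$ several times, each step lowering the power of $t$ by one and raising the order of the Bessel function by one, so that the surviving integrals again fall under Lemma~\ref{lem1}; because $\varkappa$ is an even integer this terminates after finitely many steps.

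Write $\varkappa=2m$ and, for $0\le k\le m$,
\[
I_k(r)=\int_0^{+\infty}t^{\frac{d}{2}+\varkappa-k}e^{-(t/r)^{\alpha}}J_{\frac{d}{2}+k-1}(t)\,\mathrm{d}t,\qquad
B_k(r)=\int_0^{+\infty}t^{\frac{d}{2}+\varkappa-k+\alpha-1}e^{-(t/r)^{\alpha}}J_{\frac{d}{2}+k}(t)\,\mathrm{d}t,
\]
so that $I_0=I(d,\varkappa,\cdot)$. Using the classical relation $\frac{\mathrm{d}}{\mathrm{d}t}\bigl(t^{p}J_{p}(t)\bigr)=t^{p}J_{p-1}(t)$ with $p=\frac{d}{2}+k$ and integrating $I_k$ by parts — the boundary term vanishes since near $t=0$ the relevant product is $O\bigl(t^{d+\varkappa}\bigr)$ with $d+\varkappa>0$, while at $t=+\infty$ the factor $e^{-(t/r)^{\alpha}}$ decays — one obtains the recursion
\[
I_k(r)=-(\varkappa-2k)\,I_{k+1}(r)+\frac{\alpha}{r^{\alpha}}\,B_k(r),\qquad 0\le k\le m .
\]
At $k=m$ the coefficient $\varkappa-2m$ vanishes, so $I_m(r)=\frac{\alpha}{r^{\alpha}}B_m(r)$, and unwinding the recursion gives
\[
I_0(r)=\frac{\alpha}{r^{\alpha}}\sum_{k=0}^{m}p_k\,B_k(r),\qquad p_k=(-1)^k\prod_{i=0}^{k-1}(\varkappa-2i)=(-1)^k 2^k\frac{m!}{(m-k)!}.
\]

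Every $B_k$ has precisely the form handled by Lemma~\ref{lem1}, with $\nu=\frac{d}{2}+\varkappa-k+\alpha-1$ and $\mu=\frac{d}{2}+k$, and the hypothesis $\nu>|\mu|-1$ reduces to $\varkappa+\alpha>2k$, which holds for every $k\le m$ because $\alpha>0$. Hence $r^{\alpha}I_0(r)=\alpha\sum_{k=0}^{m}p_kB_k(r)$ has a finite limit; substituting the values furnished by Lemma~\ref{lem1}, together with the prefactor $(2\pi)^{-\frac{d}{2}}$ and the elementary identity $\cos\frac{(\varkappa-2k+\alpha-1)\pi}{2}=(-1)^k\cos\frac{(\varkappa+\alpha-1)\pi}{2}$, the entire limit collapses to $\Gamma\!\bigl(\frac{d+\alpha+\varkappa}{2}\bigr)\cos\frac{(\alpha+\varkappa-1)\pi}{2}$ times the finite sum $\alpha\sum_{j=0}^{m}\frac{m!}{j!}\Gamma\!\bigl(j+\frac{\alpha}{2}\bigr)$. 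Collapsing this sum to a single Gamma factor by repeated use of $\Gamma(z+1)=z\Gamma(z)$ — best organized as an induction on $m=\varkappa/2$ — then produces the closed form in \eqref{f12_1}. (The induction in fact gives $\alpha\sum_{j=0}^{m}\frac{m!}{j!}\Gamma(j+\frac{\alpha}{2})=(\alpha+2m)\Gamma(\frac{\alpha}{2}+m)$, i.e.\ the coefficient $\alpha+\varkappa$; this coincides with \eqref{f12_1} when $\varkappa=0$ and is the value consistent with $-\Delta$ applied to the $\alpha$-stable tail in the case $\varkappa=2$.)

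The chief difficulty is not a single sharp estimate but the bookkeeping of the $(m+1)$-fold integration by parts — at each stage one must check that the boundary term is zero and that the hypothesis $\nu>|\mu|-1$ of Lemma~\ref{lem1} is still satisfied — followed by the concluding Gamma-function identity. Alternatively, since for even $\varkappa$ the operator $\Delta_\varkappa=(-\Delta)^{\varkappa/2}$ is local, one could differentiate the $\varkappa=0$ asymptotic of $D(0,\cdot)$ term by term, using $\Delta|x|^{-s}=s(s+2-d)|x|^{-s-2}$ and the standard full asymptotic expansion of the stable density; the argument sketched here avoids relying on that expansion.
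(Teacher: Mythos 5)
Your route is genuinely different from the paper's, and as a plan it is sound. The paper integrates by parts once, obtaining $I(d,\varkappa,r)=\frac{\alpha}{r^{\alpha}}I(d+2,\varkappa+\alpha-2,r)-\varkappa I(d+2,\varkappa-2,r)$ (exactly your $k=0$ step), and then disposes of the awkward term $r^{\alpha}I(d+2,\varkappa-2,r)$ by the claim, justified by L'Hopital's rule, that its limit equals $\lim_{r\to+\infty}I(d+2,\varkappa+\alpha-2,r)$. You instead iterate the integration by parts until the coefficient $\varkappa-2k$ dies at $k=m$, after which every surviving integral $B_k$ falls directly under Lemma~\ref{lem1}; no L'Hopital or limit-interchange argument is needed. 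I checked your recursion $I_k=-(\varkappa-2k)I_{k+1}+\frac{\alpha}{r^{\alpha}}B_k$ (the boundary term is $O(t^{d+\varkappa})$ at the origin and is killed by $e^{-(t/r)^{\alpha}}$ at infinity), the admissibility condition $\varkappa+\alpha>2k$ for Lemma~\ref{lem1}, and the identity $\alpha\sum_{j=0}^{m}\frac{m!}{j!}\Gamma\bigl(j+\frac{\alpha}{2}\bigr)=(\alpha+2m)\Gamma\bigl(\frac{\alpha}{2}+m\bigr)$ (a one-line induction); all are correct, so the only thing left to write out in full is that induction.

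The essential point is the constant: your scheme yields $(\alpha+\varkappa)$ where \eqref{f12_1} has $(\alpha-\varkappa)$, and for even $\varkappa\ge2$ this is a genuine discrepancy in which the evidence favors you. The paper's L'Hopital step is off by a sign: $\frac{\partial}{\partial r}I(d+2,\varkappa-2,r)=\frac{\alpha}{r^{\alpha+1}}I(d+2,\varkappa+\alpha-2,r)$ while $\frac{\mathrm{d}}{\mathrm{d}r}r^{-\alpha}=-\alpha r^{-\alpha-1}$, so the rule actually gives $\lim_{r\to+\infty}r^{\alpha}I(d+2,\varkappa-2,r)=-\lim_{r\to+\infty}I(d+2,\varkappa+\alpha-2,r)$, which turns $(\alpha-\varkappa)$ into $(\alpha+\varkappa)$ and agrees with your sum. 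A sanity check at $\varkappa=2$: by \eqref{f12_2} written in dimension $d+2$, $I(d+2,0,r)=(2\pi)^{\frac{d}{2}+1}r^{d+2}p^{(d+2)}(r)>0$, with $p^{(d+2)}$ the $(d+2)$-dimensional stable density, so $\lim r^{\alpha}I(d+2,0,r)\ge0$, whereas Lemma~\ref{lem1} gives $\lim I(d+2,\alpha,r)=-\frac{2^{\frac{d}{2}+1+\alpha}}{\pi}\Gamma\bigl(\frac{\alpha}{2}+1\bigr)\Gamma\bigl(\frac{d+\alpha}{2}+1\bigr)\sin\frac{\pi\alpha}{2}<0$; hence the paper's claimed equality cannot hold, and combining the $k=0$ identity with Blumenthal--Getoor in dimension $d+2$ yields $\lim|x|^{d+\alpha+2}D(2,x)=-(\alpha+2)\frac{2^{\alpha+1}}{\pi^{\frac{d}{2}+1}}\Gamma\bigl(\frac{\alpha}{2}+1\bigr)\Gamma\bigl(\frac{d+\alpha}{2}+1\bigr)\sin\frac{\pi\alpha}{2}<0$, exactly your value, while \eqref{f12_1} as printed is positive for $\alpha<2$ (the same value also follows from $D(2,x)=-\Delta D(0,x)$ applied to the tail $C|x|^{-d-\alpha}$). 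The two versions coincide only at $\varkappa=0$, which is why the comparison with Blumenthal--Getoor in the Remark does not detect the slip. So, once the Gamma induction is written out, your argument proves the theorem with $(\alpha-\varkappa)$ replaced by $(\alpha+\varkappa)$; correspondingly the constants $D_{\varkappa,\alpha}$ and, through Theorem~\ref{th4}, $N_{\varkappa,\alpha}$ would need the same correction.
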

\begin{proof}
Using the well-known relation (see, for example, \cite{Erdeyi})
$J_\mu^{'}(t)=-\frac{\mu}{t}J_\mu(t)+J_{\mu-1}(t)$ if $t\neq0$, we can write down the following equality
\[
I(d,\varkappa,r)=\int_0^{+\infty} t^{\frac{d}{2}+\varkappa} e^{-\left(\frac{t}{r}\right)^\alpha} J_{\frac{d}{2}}^{'}(t)\mathrm{d}t+ \frac{d}{2}I(d+2,\varkappa-2,r),
\]
that holds true if $\varkappa\neq0$ additionally. The case of $\varkappa=0$ was considered in \cite{BlumGet}. Excluding this case and integrating by part, we obtain the following relation
\[
I(d,\varkappa,r)=\frac{\alpha}{r^\alpha}I(d+2,\varkappa+\alpha-2,r)-\varkappa I(d+2,\varkappa-2,r).
\]
Note that this equality is also true if $\varkappa=0$ or if it is not integer.
It is easy exercise to verify, for example by using Lopital's rule, that
$$
\lim_{r\to+\infty}r^\alpha I(d+2,\varkappa-2,r)=\lim_{r\to+\infty}I(d+2,\varkappa+\alpha-2,r)
$$ 
for any $\varkappa>0$. In addition, using Lemma \ref{lem1} we obtain
$$
\lim_{r\to+\infty}I(d+2,\varkappa-2,r)=\frac{2^{\frac{d}{2}+\varkappa+\alpha-1}}{\pi} \Gamma\left(\frac{\alpha+\varkappa}{2}\right) \Gamma\left(\frac{d+\alpha+\varkappa}{2}\right) 
\cos\frac{\alpha+\varkappa-1}{2}\pi.
$$ 
Therefore using relation \eqref{f12_2} we obtain the theorem statement.	
\end{proof}

\begin{remark}
If $\varkappa=0$ relation \eqref{f12_1} leads us to the corresponding one obtained in \cite{BlumGet}.	
\end{remark}

The next statement provides the asymptotic of $N(\varkappa,x)$ if $|x|\to+\infty$. 

\begin{theorem}
	\label{th3}
	If $\varkappa>1-(d\wedge2)$ and $\varkappa$ is not an odd integer, for any unit vector $\nu\in\mathbb{R}^d$ the following relation
	\[
	(N(\varkappa,x),\nu)\sim 
	i\frac{2^{\varkappa}}{\pi^{\frac{d}{2}+1}}
	\Gamma\left(\frac{\varkappa+1}{2}\right)
	\Gamma\left(\frac{d+\varkappa+1}{2}\right)
	\cos\frac{\varkappa\pi}{2}
	\frac{(x,\nu)}{|x|^{d+\varkappa+1}},\quad |x|\to+\infty
	\]
	holds true.
\end{theorem}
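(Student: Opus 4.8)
Here is how I would prove Theorem~\ref{th3}.

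The plan is to reduce the statement to the scalar asymptotics already at our disposal, by using the identity $N(\varkappa,x)=-i\nabla D(\varkappa-1,\cdot)(x)$, which is valid here because $\varkappa>1-(d\wedge2)\ge1-d$. Since $D(\varkappa-1,\cdot)$ is a radial function, we may write $D(\varkappa-1,x)=F(|x|)$ for a suitable $F$ on $(0,+\infty)$, and then $\nabla D(\varkappa-1,\cdot)(x)=F'(|x|)\,x/|x|$, whence $(N(\varkappa,x),\nu)=-i\,F'(|x|)\,(x,\nu)/|x|$. (In particular $N(\varkappa,x)$ is parallel to $x$, which is exactly why the claimed answer carries the factor $(x,\nu)/|x|$.) So everything comes down to determining the asymptotic behaviour of $F'(r)$ as $r\to+\infty$.

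To control $F'$ I would not differentiate the asymptotic of $F$ coming from Theorem~\ref{th1} (that would require an extra regularity input), but instead differentiate the exact representation \eqref{f12_2}, which for $D(\varkappa-1,\cdot)$ reads $F(r)=(2\pi)^{-d/2}r^{1-d-\varkappa}I(d,\varkappa-1,r)$ with $I(d,\varkappa-1,r)=\int_0^{+\infty}t^{\frac{d}{2}+\varkappa-1}e^{-(t/r)^\alpha}J_{\frac{d}{2}-1}(t)\,\mathrm{d}t$. Differentiation under the integral sign (legitimate thanks to the exponential cutoff) gives $\partial_rI(d,\varkappa-1,r)=\frac{\alpha}{r^{\alpha+1}}I(d,\varkappa-1+\alpha,r)$, so that
\[
F'(r)=(2\pi)^{-d/2}r^{-d-\varkappa}\big[(1-d-\varkappa)I(d,\varkappa-1,r)+\alpha\,r^{-\alpha}I(d,\varkappa-1+\alpha,r)\big].
\]

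Next I would apply Lemma~\ref{lem1} to the two integrals appearing here. For $I(d,\varkappa-1,\cdot)$ the lemma's order parameters are $\frac{d}{2}+\varkappa-1$ and $\frac{d}{2}-1$, and its admissibility condition $\frac{d}{2}+\varkappa-1>\left|\frac{d}{2}-1\right|-1$ is precisely the assumption $\varkappa>1-(d\wedge2)$; it yields $\lim_{r\to+\infty}I(d,\varkappa-1,r)=\frac{2^{\frac{d}{2}+\varkappa-1}}{\pi}\Gamma\left(\frac{\varkappa+1}{2}\right)\Gamma\left(\frac{d+\varkappa-1}{2}\right)\cos\frac{\varkappa\pi}{2}$. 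For $I(d,\varkappa-1+\alpha,\cdot)$ the first parameter is enlarged by $\alpha$, so the admissibility condition holds a fortiori and this integral also has a finite limit as $r\to+\infty$. Hence the second term in the bracket above is $O(r^{-\alpha})$, which because $\alpha>0$ is negligible compared with the first, and therefore $F'(r)\sim(2\pi)^{-d/2}(1-d-\varkappa)\frac{2^{\frac{d}{2}+\varkappa-1}}{\pi}\Gamma\left(\frac{\varkappa+1}{2}\right)\Gamma\left(\frac{d+\varkappa-1}{2}\right)\cos\frac{\varkappa\pi}{2}\,r^{-d-\varkappa}$. Simplifying with $(2\pi)^{-d/2}2^{\frac{d}{2}+\varkappa-1}/\pi=2^{\varkappa-1}/\pi^{\frac{d}{2}+1}$ and the functional equation $(1-d-\varkappa)\Gamma\left(\frac{d+\varkappa-1}{2}\right)=-2\Gamma\left(\frac{d+\varkappa+1}{2}\right)$ (the Gamma arguments stay positive since $\varkappa>1-d$), and then substituting into $(N(\varkappa,x),\nu)=-iF'(|x|)(x,\nu)/|x|$, gives exactly the asserted relation. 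The hypothesis that $\varkappa$ is not an odd integer enters only at the end: otherwise $\cos\frac{\varkappa\pi}{2}=0$ and the leading coefficient degenerates to zero — equivalently $\varkappa-1$ is then an even integer, for which Theorem~\ref{th1} and Lemma~\ref{lem1} carry no information on $D(\varkappa-1,\cdot)$ and Theorem~\ref{th2} would have to be invoked.

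I expect the one genuinely delicate point to be the handling of $F'$ itself — namely making sure that the ``remainder'' $\alpha r^{-\alpha}I(d,\varkappa-1+\alpha,r)$ is of strictly lower order than the main term; this is where positivity of $\alpha$ together with the finiteness of the second limit (again via Lemma~\ref{lem1}) are used. The verification of the lemma's hypotheses in the two applications, the differentiation under the integral sign, and the slight care needed in the marginal dimension $d=1$ for the Bochner--Chandrasekharan formula \eqref{f12_2} are the remaining routine points.
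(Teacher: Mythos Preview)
Your proof is correct and is essentially the same argument as the paper's. The paper obtains, by integrating by parts in \eqref{f9_1}, the identity
\[
(N(\varkappa,x),\nu)=-i\frac{(x,\nu)}{|x|^{2}}\big(\alpha D(\alpha+\varkappa-1,x)-(d+\varkappa-1)D(\varkappa-1,x)\big),
\]
and then applies Theorem~\ref{th1} to each $D$-term; your bracket $(1-d-\varkappa)I(d,\varkappa-1,r)+\alpha r^{-\alpha}I(d,\varkappa-1+\alpha,r)$ is this same identity written via \eqref{f12_2}, and your two invocations of Lemma~\ref{lem1} are exactly what Theorem~\ref{th1} unwinds to. The only cosmetic difference is that you differentiate the Hankel-transform representation \eqref{f12_2} directly, whereas the paper phrases the intermediate step in terms of $D$ and then quotes Theorem~\ref{th1}; the content and the delicate point (the $O(r^{-\alpha})$ subleading term) are handled identically.
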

\begin{proof}
	As it was mentioned above, we have that $N(\varkappa,x)=-i\nabla D(\varkappa-1,\cdot)(x)$ for all $x\in\mathbb{R}^d$ and $\varkappa>-d+1$. Using expression \eqref{f9_1} and integrating by part one can easily obtain the following equality
	\[
	(N(\varkappa,x),\nu)=-i\frac{(x,\nu)}{|x|^{2}}\left(\alpha D(\alpha+\varkappa-1,x)-(d+\varkappa-1)D(\varkappa-1,x)\right),
	\]
	which is fulfilled for all $x\in\mathbb{R}^d$ and $\varkappa>-d+1$. This equality and the statement of Theorem~\ref{th1} lead us to the relations (if $|x|\to+\infty$)
	\[
	\begin{split}
	(N(\varkappa,x),\nu)\sim-i\frac{(x,\nu)}{|x|^{2}}\left(\alpha D_{\alpha+\varkappa-1}|x|^{-d-\alpha-\varkappa+1}-(d+\varkappa-1)D_{\varkappa-1}|x|^{-d-\varkappa+1}\right)\\
	\sim i(d+\varkappa-1)D_{\varkappa-1}|x|^{-d-\varkappa-1}(x,\nu),	
	\end{split}\]
	that hold for all $\varkappa>1-(d\wedge2)$, where by $D_\varkappa$ we denote the value of the limit presented by formula \eqref{f12}. The theorem statement is already obvious.
\end{proof}

A special case is that the number $\varkappa$ is an odd integer. The corresponding statement is as follows.
\begin{theorem}\label{th4}
If an odd integer $\varkappa\ge1$, then
\[\begin{split}
(N(\varkappa&,x),\nu)\sim 
-i\frac{(d+\varkappa-1)(\alpha-\varkappa+1)2^{\varkappa+\alpha-2}}{\pi^{\frac{d}{2}+1}}\\
&\times\Gamma\left(\frac{\alpha+\varkappa-1}{2}\right)
\Gamma\left(\frac{d+\alpha+\varkappa-1}{2}\right)
\cos\frac{(\alpha+\varkappa)\pi}{2}
\frac{(x,\nu)}{|x|^{d+\alpha+\varkappa+1}},\quad |x|\to+\infty,
\end{split}
\]
for any unit vector $\nu\in\mathbb{R}^d$.
\end{theorem}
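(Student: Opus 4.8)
The plan is to follow exactly the route used for Theorem~\ref{th3}, the single change being that one of the two auxiliary functions now has to be treated by Theorem~\ref{th2} instead of Theorem~\ref{th1}. Recall that $N(\varkappa,x)=-i\nabla D(\varkappa-1,\cdot)(x)$, and that (as derived in the proof of Theorem~\ref{th3} by one integration by parts starting from \eqref{f9_1}) for every $\varkappa>1-d$ one has
\[
(N(\varkappa,x),\nu)=-i\frac{(x,\nu)}{|x|^{2}}\bigl(\alpha D(\alpha+\varkappa-1,x)-(d+\varkappa-1)D(\varkappa-1,x)\bigr).
\]
Since $\varkappa\ge1$ is an odd integer we certainly have $\varkappa>1-d$, so this identity is available and the whole task reduces to inserting the correct large-$|x|$ asymptotics of $D(\alpha+\varkappa-1,\cdot)$ and of $D(\varkappa-1,\cdot)$.

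The first point to notice is that $\varkappa-1$ is now a \emph{nonnegative even integer}, so $\cos\frac{\varkappa}{2}\pi=0$: the constant in \eqref{f12} at the value $\varkappa-1$ vanishes, and Theorem~\ref{th1} gives only $|x|^{d+\varkappa-1}D(\varkappa-1,x)\to0$, which is too weak. Instead I would invoke Theorem~\ref{th2} with its parameter equal to $\varkappa-1$ (legitimate, since $\varkappa-1$ is an even integer $\ge0$), obtaining $D(\varkappa-1,x)\sim\tilde D_{\varkappa-1}\,|x|^{-d-\alpha-\varkappa+1}$ as $|x|\to+\infty$, where $\tilde D_{\varkappa-1}$ denotes the right-hand side of \eqref{f12_1} evaluated at $\varkappa-1$. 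For the other function, $\alpha+\varkappa-1>0$ and it is not an even integer (since $\alpha$ is not an even integer, while $\varkappa-1$ is), so Theorem~\ref{th1} applies to it directly and gives $D(\alpha+\varkappa-1,x)\sim D_{\alpha+\varkappa-1}\,|x|^{-d-\alpha-\varkappa+1}$, with $D_{\alpha+\varkappa-1}$ the value of \eqref{f12} at $\alpha+\varkappa-1$.

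Substituting these two asymptotics into the identity above and factoring out $(x,\nu)|x|^{-d-\alpha-\varkappa-1}$ leaves one explicit constant formed from $\alpha D_{\alpha+\varkappa-1}$ and $(d+\varkappa-1)\tilde D_{\varkappa-1}$. To reduce it to the expression claimed in the theorem I would then (i) use $\Gamma(z+1)=z\Gamma(z)$ to make the Gamma-factors of the two pieces coincide, namely $\Gamma\!\bigl(\tfrac{\alpha+\varkappa-1}{2}\bigr)\Gamma\!\bigl(\tfrac{d+\alpha+\varkappa-1}{2}\bigr)$; (ii) use that $\varkappa$ is odd to evaluate the trigonometric factors, rewriting in particular the cosine coming from \eqref{f12_1} via $\cos(\theta-\pi)=-\cos\theta$ so that everything is carried by $\cos\frac{(\alpha+\varkappa)\pi}{2}$; and (iii) collect the remaining polynomial-in-$(d,\alpha,\varkappa)$ prefactors. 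This produces the displayed asymptotic, uniformly in the unit vector $\nu$.

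The genuinely delicate step — the place where the odd-integer case really departs from Theorem~\ref{th3} — is the comparison of decay rates. In Theorem~\ref{th3} the function $D(\varkappa-1,\cdot)$ decays like $|x|^{-d-\varkappa+1}$ and therefore \emph{dominates} $D(\alpha+\varkappa-1,\cdot)$, so only its constant matters; here that $|x|^{-d-\varkappa+1}$ contribution is absent, and the Theorem~\ref{th2} term that replaces it has \emph{exactly the same} order $|x|^{-d-\alpha-\varkappa+1}$ as $D(\alpha+\varkappa-1,\cdot)$. Consequently the leading coefficient of $(N(\varkappa,x),\nu)$ is not the Theorem~\ref{th3} coefficient evaluated at an odd $\varkappa$ but a genuine combination of the two constants, which has to be assembled and simplified from scratch. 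Once this is recognised, the remaining computation is entirely routine — of the same elementary nature as the passage from Theorem~\ref{th1} to Theorem~\ref{th2}.
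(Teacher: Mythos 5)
You have correctly identified the delicate point: for odd $\varkappa$ the Theorem \ref{th1} constant at $\varkappa-1$ vanishes, $D(\varkappa-1,\cdot)$ must be handled by Theorem \ref{th2}, and then the two terms in the identity $(N(\varkappa,x),\nu)=-i\frac{(x,\nu)}{|x|^{2}}\bigl(\alpha D(\alpha+\varkappa-1,x)-(d+\varkappa-1)D(\varkappa-1,x)\bigr)$ decay at the \emph{same} rate $|x|^{-d-\alpha-\varkappa+1}$, so the leading coefficient is a genuine combination of the two constants. The gap is your concluding claim that this combination, after the routine steps (i)--(iii), ``produces the displayed asymptotic''. It does not. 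Evaluating \eqref{f12} at $\alpha+\varkappa-1$ (with $\Gamma\bigl(\tfrac{\alpha+\varkappa+1}{2}\bigr)=\tfrac{\alpha+\varkappa-1}{2}\Gamma\bigl(\tfrac{\alpha+\varkappa-1}{2}\bigr)$) and \eqref{f12_1} at $\varkappa-1$ (with $\cos\tfrac{(\alpha+\varkappa-2)\pi}{2}=-\cos\tfrac{(\alpha+\varkappa)\pi}{2}$), the identity yields
\[
\begin{split}
(N(\varkappa,x),\nu)\sim -\,i\,\bigl[\alpha(\alpha+\varkappa-1)+(d+\varkappa-1)(\alpha-\varkappa+1)\bigr]\,\frac{2^{\alpha+\varkappa-2}}{\pi^{\frac{d}{2}+1}}\\
\times\Gamma\Bigl(\frac{\alpha+\varkappa-1}{2}\Bigr)\Gamma\Bigl(\frac{d+\alpha+\varkappa-1}{2}\Bigr)\cos\frac{(\alpha+\varkappa)\pi}{2}\,\frac{(x,\nu)}{|x|^{d+\alpha+\varkappa+1}},\quad |x|\to+\infty,
\end{split}
\]
whose bracket differs from the stated $(d+\varkappa-1)(\alpha-\varkappa+1)$ by the strictly positive quantity $\alpha(\alpha+\varkappa-1)$, while the common factor $\cos\frac{(\alpha+\varkappa)\pi}{2}$ is nonzero for $\alpha\in(0,2)$. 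In other words, the displayed constant is exactly what one gets by keeping only $-(d+\varkappa-1)D(\varkappa-1,x)$ and discarding $\alpha D(\alpha+\varkappa-1,x)$ --- which is what the paper's own one-line proof (``as in Theorem \ref{th3}, with Theorem \ref{th2} in place of Theorem \ref{th1}'') implicitly does --- even though, as you yourself stress, that term is no longer of lower order. So either your step (iii) hides an algebraic slip, or the honest combination simply does not reduce to the stated formula; you cannot have both your (correct) structural observation and the theorem's constant.

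A sanity check shows the combined constant, not the stated one, is the true asymptotic. Take $\varkappa=1$: then $N(1,x)=-i\nabla D(0,\cdot)(x)$ and $D(0,x)=g(1,x,0)$ (with $c=1$) has the Blumenthal--Getoor tail $D(0,x)\sim D_{0,\alpha}|x|^{-d-\alpha}$; differentiating the radial asymptotic expansion of the stable density gives $(N(1,x),\nu)\sim i(d+\alpha)D_{0,\alpha}(x,\nu)|x|^{-d-\alpha-2}$, which is what the bracket $\alpha\cdot\alpha+d\cdot\alpha=\alpha(d+\alpha)$ above reproduces, whereas the theorem as displayed gives the factor $d$ in place of $d+\alpha$ (for $d=1$ this contradicts the classical expansion of the one-dimensional symmetric stable density, whose derivative has tail constant $\Gamma(\alpha+2)\sin\tfrac{\alpha\pi}{2}/\pi$). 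So your plan is the right route --- and more careful than the paper's own argument --- but carried to the end it proves a corrected version of the statement; the final ``routine'' simplification is precisely the step that would fail for the statement as written.
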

\begin{proof}
The proof is similar to Theorem \ref{th3} proof if use the statement of Theorem \ref{th2} instead of Theorem \ref{th1}.
\end{proof}

Using the statements of Theorems \ref{th1}~--- \ref{th4} we obtain bilateral estimations of the functions $(\Delta_\varkappa g(t,\cdot,y)(x))_{t>0,x\in\mathbb{R}^d,y\in\mathbb{R}^d}$ and $(\nabla_\varkappa g(t,\cdot,y)(x))_{t>0,x\in\mathbb{R}^d,y\in\mathbb{R}^d}$.

Let us introduce the notations:
\[
D_\varkappa:=\frac{2^\varkappa}{\pi^{\frac{d}{2}+1}} \Gamma\left(\frac{d+\varkappa}{2}\right)\Gamma\left(\frac{\varkappa}{2}+1\right) \cos\frac{\varkappa+1}{2}\pi
\] 
for $\varkappa>-(d\wedge2)$ except of  an even integer;
\[
D_{\varkappa,\alpha}:=\frac{(\alpha-\varkappa) 2^{\varkappa+\alpha-1}}{\pi^{\frac{d}{2}+1}} \Gamma\left(\frac{\alpha+\varkappa}{2}\right) \Gamma\left(\frac{d+\alpha+\varkappa}{2}\right) 
	\cos\frac{\alpha+\varkappa-1}{2}\pi
\]
for $\alpha\in(0,2)$ and an even integer $\varkappa\ge0$;
\[
N_\varkappa:=-\frac{2^{\varkappa}}{\pi^{\frac{d}{2}+1}}
	\Gamma\left(\frac{\varkappa+1}{2}\right)
	\Gamma\left(\frac{d+\varkappa+1}{2}\right)
	\cos\frac{\varkappa\pi}{2}
\]
for  $\varkappa>1-(d\wedge2)$ except of an odd integer;
\[
N_{\varkappa,\alpha}:=\frac{(d+\varkappa-1)(\alpha-\varkappa+1)2^{\varkappa+\alpha-2}}{\pi^{\frac{d}{2}+1}}\Gamma\left(\frac{\alpha+\varkappa-1}{2}\right)
	\Gamma\left(\frac{d+\alpha+\varkappa-1}{2}\right)
	\cos\frac{(\alpha+\varkappa)\pi}{2}
\]
for $\alpha\in(0,2)$ and an odd integer $\varkappa\ge1$.

The following statement is simple consequence of Theorems \ref{th1}~--- \ref{th4}. 
\begin{theorem}\label{corol}
	For any fixed $0<\varepsilon<1$, there exists a constant $K>0$, such that for all $t>0$, $x\in\mathbb{R}^d$, and $y\in\mathbb{R}^d$ satisfying the inequality $|x-y|\ge K t^{\frac{1}{\alpha}}$, there are fulfilled the following inequalities
	\begin{equation}
	\label{f19}
	(1-\varepsilon\sign D_\varkappa)D_\varkappa|x-y|^{-d-\varkappa}\le\Delta_\varkappa g(t,\cdot,y)(x)\le(1+\varepsilon\sign D_\varkappa)D_\varkappa|x-y|^{-d-\varkappa}
	\end{equation}for $\varkappa>-(d\wedge2)$, which is not even integer;
	\begin{equation}
	\label{f20}
	\begin{split}
	\big(1-\varepsilon\sign(N_\varkappa(x-y,\nu))\big)N_\varkappa\frac{(x-y,\nu)}{|x-y|^{d+\varkappa+1}}\le i(\nabla_\varkappa g(t,\cdot,y)(x),\nu)\\
	\le\big(1+\varepsilon\sign(N_\varkappa(x-y,\nu))\big)N_\varkappa\frac{(x-y,\nu)}{|x-y|^{d+\varkappa+1}}
\end{split}
		\end{equation}
for $\varkappa>1-(d\wedge2)$, which is not odd integer, and any $\nu\in\mathbb{R}^d$;
	\begin{equation}
	\label{f19_1}
	(1-\varepsilon\sign D_{\varkappa,\alpha})D_{\varkappa,\alpha}|x-y|^{-d-\alpha-\varkappa}\le\Delta_\varkappa g(t,\cdot,y)(x)\le(1+\varepsilon\sign D_{\varkappa,\alpha})D_{\varkappa,\alpha}|x-y|^{-d-\alpha-\varkappa}
\end{equation}for even integer $\varkappa\ge0$;
\begin{equation}
	\label{f20_1}
	\begin{split}
		\big(1-\varepsilon\sign(N_\varkappa(x-y,\nu))\big)N_{\varkappa,\alpha}\frac{(x-y,\nu)}{|x-y|^{d+\alpha+\varkappa+1}}\le i(\nabla_\varkappa g(t,\cdot,y)(x),\nu)\\
		\le\big(1+\varepsilon\sign(N_{\varkappa,\alpha}(x-y,\nu))\big)N_{\varkappa,\alpha}\frac{(x-y,\nu)}{|x-y|^{d+\alpha+\varkappa+1}}
	\end{split}
\end{equation}
for odd integer $\varkappa\ge1$ and any $\nu\in\mathbb{R}^d$.

\end{theorem}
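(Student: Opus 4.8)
The plan is to deduce Theorem \ref{corol} directly from the pointwise asymptotics already established in Theorems \ref{th1}--\ref{th4} together with the scaling identities \eqref{f10}--\eqref{f11}. First I would observe that \eqref{f10} says
\[
\Delta_\varkappa g(t,\cdot,y)(x)=(ct)^{-\frac{d+\varkappa}{\alpha}}D\bigl(\varkappa,(ct)^{-\frac1\alpha}(x-y)\bigr),
\]
so it suffices to translate a statement about $D(\varkappa,\cdot)$ near infinity into a statement about $\Delta_\varkappa g$ on the region $|x-y|\ge K t^{1/\alpha}$; the factor $(ct)^{-\frac{d+\varkappa}{\alpha}}$ is exactly what converts $|(ct)^{-1/\alpha}(x-y)|^{-d-\varkappa}$ into $|x-y|^{-d-\varkappa}$, so the constant $D_\varkappa$ (or $D_{\varkappa,\alpha}$, $N_\varkappa$, $N_{\varkappa,\alpha}$) comes out with no $t$-dependence, as the statement requires.

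Next I would make the $\varepsilon$-bookkeeping precise. Fix $0<\varepsilon<1$. For the non-even-integer case, Theorem \ref{th1} gives $\lim_{|z|\to\infty}|z|^{d+\varkappa}D(\varkappa,z)=D_\varkappa$, hence there is $R=R(\varepsilon)>0$ with
\[
D_\varkappa-|\varepsilon D_\varkappa|\le |z|^{d+\varkappa}D(\varkappa,z)\le D_\varkappa+|\varepsilon D_\varkappa|\quad\text{for }|z|\ge R .
\]
Writing $|\varepsilon D_\varkappa|=\varepsilon\,(\sign D_\varkappa)\,D_\varkappa$ turns these into $(1-\varepsilon\sign D_\varkappa)D_\varkappa\le|z|^{d+\varkappa}D(\varkappa,z)\le(1+\varepsilon\sign D_\varkappa)D_\varkappa$, which is \eqref{f19} after the substitution $z=(ct)^{-1/\alpha}(x-y)$ and multiplication by $(ct)^{-\frac{d+\varkappa}{\alpha}}$; the hypothesis $|x-y|\ge K t^{1/\alpha}$ with $K:=c^{1/\alpha}R$ guarantees $|z|\ge R$. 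The even-integer case \eqref{f19_1} is identical with Theorem \ref{th2} and $D_{\varkappa,\alpha}$ replacing $D_\varkappa$, the extra $|x-y|^{-\alpha}$ coming from the exponent $d+\alpha+\varkappa$ in \eqref{f12_1}. For the gradient statements \eqref{f20} and \eqref{f20_1} one does the same starting from Theorems \ref{th3} and \ref{th4}; here one notes that $N_\varkappa=-D_{\varkappa-1}\cdot\frac{\text{(stuff)}}{\text{(stuff)}}$ is chosen so that $i(N(\varkappa,z),\nu)\sim N_\varkappa\,(z,\nu)|z|^{-d-\varkappa-1}$, and since $(x-y,\nu)$ and $(z,\nu)$ have the same sign the factor $\sign(N_\varkappa(x-y,\nu))$ is the correct book-keeping sign (when $(x-y,\nu)=0$ both sides vanish and the inequality is trivial). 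One caveat is that the asymptotic relation in Theorem \ref{th3} must be read as: for every $\varepsilon$ there is $R$ with $\bigl|\,i(N(\varkappa,z),\nu)-N_\varkappa(z,\nu)|z|^{-d-\varkappa-1}\bigr|\le\varepsilon|N_\varkappa|\,|(z,\nu)|\,|z|^{-d-\varkappa-1}$ for $|z|\ge R$ uniformly in the unit vector $\nu$; this uniformity is immediate from the proof of Theorem \ref{th3}, since the error there is $o(|z|^{-d-\varkappa-1})$ in $D(\varkappa-1,z)$, $D(\alpha+\varkappa-1,z)$ with constants independent of $\nu$.

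Finally I would collect the four required thresholds $R_1,R_2,R_3,R_4$ (one per regime; they depend on $\varepsilon$, $d$, $\alpha$, $\varkappa$ but not on $t,x,y,\nu$) and set $K:=c^{1/\alpha}\max\{R_1,R_2,R_3,R_4\}$ — or, since the four cases are mutually exclusive in $\varkappa$, simply take in each case the $K$ arising from that case. The main (and essentially only) obstacle is the bookkeeping with the $\sign$ factors and making sure the $o(\cdot)$ error in Theorem \ref{th3}/\ref{th4} is genuinely uniform in $\nu$ so that a single $K$ works for all unit vectors simultaneously; everything else is the scaling substitution, which is routine given \eqref{f10}--\eqref{f11}.
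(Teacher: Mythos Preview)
Your proposal is correct and follows exactly the approach of the paper: combine the scaling identities \eqref{f10}--\eqref{f11} with the asymptotic statements of Theorems~\ref{th1}--\ref{th4}, then choose $K$ from the threshold $R$ in the definition of the limit. The paper's own proof is a one-line reference to precisely these ingredients; your version simply makes the $\varepsilon$-bookkeeping and the uniformity in $\nu$ explicit.
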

\begin{proof}
	For the proof, one has to use relations \eqref{f10}, \eqref{f11} in addition to Theorems \ref{th1}~--- \ref{th4} statements.
\end{proof}
\begin{remark}
	For the differential operators under consideration, estimates \eqref{f19}~--- \eqref{f21} lead us to \eqref{f23} or \eqref{f23_1}, but the last one was obtained only for $\varkappa>0$ and it is the estimation from above. 
\end{remark}

\end{document}